\newtheorem{theorem}{Theorem}[section]
\newtheorem{lemma}[theorem]{Lemma}
\theoremstyle{definition}
\theoremstyle{remark}
\newtheorem{remark}[theorem]{Remark}
\numberwithin{equation}{section}
\newcommand{\hyp}[5]{{}_{#1}F_{#2}\!
\left(\genfrac{}{}{0pt}{}{#3}{#4};#5\right)}
\newcommand{\pe}[2]{\left\langle#1,#2\right\rangle}
\newcommand{\N}{{\mathbb N}}
\newcommand{\expe}{{\mathrm e}}
\newcommand{\dd}{{\mathrm d}}
\definecolor{Mycolor2}{HTML}{e85d04}
\begin{document}
\title{Orthogonality of the big $-1$ Jacobi polynomials for non-standard parameters}
\date{\today}
\author[H.~S.~Cohl]{Howard S.~Cohl}
\address{Applied and Computational 
Mathematics Division, National Institute of Standards 
and Tech\-no\-lo\-gy, Gaithersburg, MD 20899-8910, USA}
\email{howard.cohl@nist.gov}
\author[R.~S.~Costas-Santos]{Roberto S.~Costas-Santos}
\address{Department of Quantitative Methods, Universidad Loyola Andaluc\'ia, Sevilla, Spain}
\email{rscosa@gmail.com}
\subjclass[2020]{Primary 44A20; Secondary 42C05; 33C05;}
\date{\today}
\begin{abstract}
The big $-1$ Jacobi polynomials $(Q_n^{(0)}(x;\alpha,\beta,c))_n$ have been classically defined for 
$\alpha,\beta\in(-1,\infty)$, $c\in(-1,1)$.
We extend 
this family so that wider sets of parameters are allowed, i.e.,
they are non-standard.
Assuming initial conditions $Q^{(0)}_0(x)=1$, $Q^{(0)}_{-1}(x)=0$,
we consider the big $-1$ Jacobi polynomials as 
monic orthogonal polynomials 
which therefore satisfy the following three-term 
recurrence relation
\[
xQ^{(0)}_n(x)=Q^{(0)}_{n+1}(x)+b_{n} Q^{(0)}_n(x)+ u_{n} Q^{(0)}_{n-1}(x), 
\quad n=0, 1, 2,\ldots.
\]
For standard parameters, the coefficients $u_n>0$ 
for all $n$.
We discuss the situation where Favard's theorem cannot be directly applied for some positive integer 
$n$ such that $u_n=0$.
We express the big $-1$ Jacobi polynomials for 
non-standard parameters as a product of two polynomials. 
Using this factorization, we obtain a bilinear form with 
respect to which these polynomials are orthogonal.
\end{abstract}

\maketitle
\section{Introduction}
In 2011, Vinet and Zhedanov in \cite{MR2770369} 
obtained a new family of ``classical'' orthogonal
polynomials which can be obtained from the little 
$q$-Jacobi polynomials in the limit $q\to-1$. 
By ``classical'' it is meant
that these polynomials 
satisfy  (apart from a three-term recurrence relation) 
a nontrivial eigenvalue equation of the form
\[
{\mathcal L}(p_n(x))=\lambda_n p_n(x),
\]
where $\mathcal L$ is a linear differential-difference 
operator  which is of first order in the derivative  $\partial_x$ and contains 
a reflection operator $R$ which acts as $Rf(x)=f(-x)$.
They referred to these polynomials as being in the continuous $-1$ hypergeometric 
orthogonal polynomial scheme. This was illustrated by using a limiting process from the 
Askey--Wilson polynomials to the Bannai--Ito polynomials. 
In \cite{MR2931336} the same authors applied this 
limit by starting with the big 
$q$-Jacobi polynomials in order to obtain hypergeometric representations for the big $-1$ Jacobi polynomials.

The polynomials the authors have considered in this $-1$ hypergeometric orthogonal polynomial scheme
are $q$-polynomials for standard parameters, i.e., 
those values for the parameters whose 
weight function 
related to these families is positive definite 
(see \cite{pelletieretall2023}). 
The main aim of this work is to investigate the property 
of orthogonality for the big $-1$ polynomials 
for those values of the parameters, $\alpha$, and $\beta$, for which the coefficient $u_n$ in the recurrence relation \eqref{eq:TTRRgen} relative to these polynomials can be equal to zero for some 
$n\in \mathbb N_0$.

In the last three decades, some of the classical 
orthogonal polynomials with non-classical parameters 
have been provided with certain non-standard 
orthogonality pro\-per\-ties.
In the pioneering work by Kwon and Littlejohn in 1995 \cite{mr1343537}, Sobolev orthogonality for the Laguerre 
polynomials $L_n^{(-N)}(x)$ with $N\in\mathbb{N}$, 
was obtained using the following inner product:
\begin{equation*}
\pe f g=\sum_{m=0}^{N-1}\sum_{j=0}^m B_{m,j}(N)
\left(f^{(m)}(0)g^{(j)}(0)+f^{(j)}(0)g^{(m)}(0)
\right)
+\int_0^{\infty} f^{(N)}(x)g^{(N)}(x)\,\expe^{-x}\,\dd x,
\end{equation*}
where the superscripts represent differentiation and 
\[
B_{m,j}(N):=\left\{\begin{array}{ll}
\displaystyle \sum_{p=0}^j(-1)^{m+j}\binom{N-1-p}{m-p}\binom{N-1-p}{j-p},& 0\le j\le m\le N-1, \\[7mm]
\displaystyle \frac 12\sum_{p=0}^m\binom{N-1-p}{m-p}^2,& 0\le j=m\le N-1.
\end{array}
\right.
\]
In 1998, \'Alvarez de Morales and co-authors 
\cite{mopepi2} found, using a different technique, 
the orthogonality 
for ultraspherical polynomials $C_n^{-N+\frac12}$, 
where $N\in\N$. 
Later in \cite{mr1917278}, M.~Alfaro et al.~considered 
the cases for the Jacobi polynomials in which both 
parameters, $\alpha$, and $\beta$, were negative integers, 
proving that in such a case the Jacobi polynomials 
satisfy a Sobolev orthogonality.
In \cite{mr1795525}, M.~Alfaro et al.~considered 
the situation where the inner product can be written 
in the form
\begin{equation}\label{OrtgBilin}
\langle f, g\rangle = F^t A G+\int f^{(N)}(x)g^{(N)}(x)\,\mathrm d\mu(x)\,,
\end{equation}
where $F$ and $G$ are vectors obtained by evaluating 
$f$ and $g$ and maybe their derivatives at some points, 
$A$ is a symmetric real matrix, and $\mathrm d\mu$ is the 
orthogonality measure associated with the $N$th 
derivative of either the Laguerre, ultraspherical 
or Jacobi polynomials.

Note that for the parameters considered in this situation 
there exists a $n=N\in\N_0$ so that the coefficient 
$u_n$ in the recurrence relation 
\begin{equation}\label{eq:TTRRgen}
xp_n=p_{n+1}+b_n p_n+u_n p_{n-1}\,,\qquad n=0, 1, 2, ....
\end{equation}
vanishes, i.e., $u_N=0$. 
We say a non-standard parameter (or set of parameters) is degenerate if there exists at least one positive integer $N$ for which $u_N=0$.

The first term in the inner product \eqref{OrtgBilin} plays the role 
of the orthogonality 
for the set of polynomials of degree less than $N$, 
since if $f, g$ are two polynomials, 
with $\deg f$, $\deg g<N$, then $f^{(N)}(x)=g^{(N)}(x)=0$, and one has
\[
\langle f, g\rangle=F^t A G.
\]
In this term, the points for the evaluation are the 
roots of $p_N$; this ensures that the inner product vanishes 
when one entry is $p_n$ with $n\geq N$, since $p_N$ 
is a factor of any $p_n$ with $n\geq N$.
The second term is relevant for polynomials with 
degree greater than $N$, and it is  needed in order 
to have an orthogonality
characterizing  the sequence $(p_n)_{n=0}^{\infty}$.
The technique used in \cite{mopepi2} is applicable 
for all classical orthogonal polynomials where
$u_n$ vanishes at certain $n=N\in \mathbb N_0$,
and in fact it has been used, among others, in 
\cite{mr2560989,mr2643877,mr2581384,mr2848982}.

In 2009 \cite{cola1}, 
Costas-Santos and 
Sanchez-Lara studied 
this problem for classical discrete polynomials, i.e., discrete 
polynomials in the Askey Scheme of generalized hypergeometric orthogonal polynomials. We consider classical discrete polynomials 
for which there exists $N$ for which $u_N=0$, 
i.e., for non-standard degenerate parameters.
Note that 
this can only happen for the Racah, Hahn, dual Hahn, and 
Krawtchouk polynomials which are the Wilson, continuous 
Hahn, continuous dual Hahn and Meixner polynomials for 
which some of its parameters are equal to a negative integer.
The corresponding three-term recurrence relation for 
these polynomials also presents one vanishing coefficient 
i.e., $u_N=0$, and the inner product
found can be written as
\begin{equation}\label{OrtgLin}
\langle f, g\rangle = \int f(x)g(x)\,\mathrm d\mu_d(x)
+\int f^{(N)}(x)g^{(N)}(x)
\,\mathrm d\mu_c(x)\,,
\end{equation}
where $\mathrm d\mu_d$ is a discrete measure with a finite number 
of masses and $\mathrm d\mu_c$ is an absolutely continuous measure. 
For more details and further reading see 
\cite{cola1,cola2,mr2149265,mr1405981,mr3619764} and references 
therein.
The organization of the paper is as follows. In Section \ref{sec2} 
we provide some preliminary material
and in Section 
\ref{sec3} we obtain the property of orthogonality for the big $-1$ Jacobi polynomials for non-standard degenerate parameters.
\section{Preliminary material and notations}\label{sec2}
We adopt the following notations:~$\mathbb N_0:=\{0\}\cup\mathbb N=\{0, 1, 2, ...\}$, and 
we use the sets $\mathbb P$, $\mathbb P'$, 
to represent the linear space of polynomials 
with complex coefficients and its algebraic dual space.
We denote by $\langle {\bf u},p\rangle$ the duality 
bracket for ${\bf u}\in \mathbb P'$ and $p\in \mathbb P$.
The big $-1$ Jacobi polynomials have representations given in terms of the Gauss 
hypergeometric function 
defined as \cite[\href{http://dlmf.nist.gov/15.2.E1}{(15.2.1)}]{NIST:DLMF} 
\begin{equation}
\hyp21{a,b}{c}{z}:=\sum_{n=0}^\infty \frac{(a)_n(b)_n}
{(c)_n}\frac{z^n}{n!},
\end{equation}
where $|z|<1$ and the shifted factorial is defined 
as \cite[\href{http://dlmf.nist.gov/5.2.E4}{(5.2.4-5)}]{NIST:DLMF}
$(a)_n:=(a)(a+1)\cdots(a+n-1)$,
$a\in\mathbb C$,
$n\in\N_0$. The following ratio of two gamma functions \cite[\href{http://dlmf.nist.gov/5}{Chapter 5}]{NIST:DLMF} are related to 
the shifted factorial, namely
for $a\in\mathbb C\setminus-{\N}_0$, one
 has
\begin{equation}
(a)_n=\frac{\Gamma(a+n)}{\Gamma(a)},
\label{Pochdef}
\end{equation}
which allows one to extend the definition to non-positive 
integer values of $n$. For a description of the properties of the gamma function, see \cite[\
href{http://dlmf.nist.gov/5}{Chapter 5}]{NIST:DLMF}.


\section{Big $-1$ Jacobi polynomials}\label{sec3}

The monic 
big $-1$ Jacobi polynomials 
can be defined in terms of 
the Gauss hypergeometric function 
as follows:
\begin{equation}
\label{eq:bQdef}
\hspace{0.3cm}Q_n^{(0)}(x):=\!\kappa_n
\left\{ \begin{array}{ll}
\displaystyle \!\!\hyp{2}{1}
{-\frac n2,\frac{n+\alpha+\beta+2}2}{\frac{\alpha+1}2}
{\dfrac{1\!-\!x^2}{1\!-\!c^2}}\!+\!\dfrac{n(1-x)}{(c+1)(\alpha+1)}
\hyp{2}{1}{1-\frac n2,\frac{n+\alpha+\beta+2}2}
{\frac{\alpha+3}2}{\dfrac{1\!-\!x^2}{1\!-\!c^2}},\   
\quad\mathrm{if}\ n\ \mathrm{even},\\[18pt]
\displaystyle \!\hyp{2}{1}{-\frac {n-1}2,\frac{n+\alpha+\beta+1}2}
{\frac{\alpha+1}2}
{\dfrac{1\!-\!x^2}{1\!-\!c^2}}\!-\!\dfrac{(\alpha\!+\!\beta\!+\!n\!+\!1)(1\!-\!x)}
{(1\!+\!c)(\alpha\!+\!1)}
\hyp{2}{1}{-\frac {n-1}2,\frac{n+\alpha+\beta+3}2}
{\frac{\alpha+3}2}{\dfrac{1\!-\!x^2}{1\!-\!c^2}}, \ \mathrm{if}\ n\ \mathrm{odd},
\end{array} \right.
\end{equation}
where $\kappa_n$ is defined as
\begin{equation}
\label{eq:kappan}
\hspace{0.3cm}\kappa_n:=\!
\left\{ \begin{array}{ll}
\displaystyle \!\dfrac{(1-c^2)^{\frac12n}(\frac12(\alpha+1))_{\frac12n}}
{(\frac12(n+\alpha+\beta+2))_{\frac12n}},  
\quad\mathrm{if}\ n\ \mathrm{even},\\[18pt]
\displaystyle \!(c+1)\dfrac{(1-c^2)^{\frac12(n-1)}(\frac12(\alpha+1))_{\frac12(n-1)}}
{(\frac12(n+\alpha+\beta+1))_{\frac12(n-1)}}, \quad\mathrm{if}\ n\ \mathrm{odd}.
\end{array} \right.
\end{equation}


\noindent The big $-1$ Jacobi polynomials 
$Q_n^{(0)}(x;\alpha,\beta,c)$, satisfy the three-term 
recurrence relation \cite[(2.20), (2.21)]{MR2931336}:
\begin{equation}\label{ttrrbmonej}
x Q_n^{(0)}(x)=Q_{n+1}^{(0)}(x)+b_n Q_n^{(0)}(x)+
u_n Q_{n-1}^{(0)}(x),
    \end{equation}
where
\begin{equation}
\label{eq:bn}
\hspace{0.3cm}b_n:=\!
\left\{ \begin{array}{ll}
\displaystyle \!-c+\dfrac{(c-1)n}{\alpha+\beta+2n}+\dfrac{(c+1)(\beta+n+1)}
{\alpha+\beta+2n+2},\quad\mathrm{if}\ n\ \mathrm{even},\\[14pt]
\displaystyle \!c-\dfrac{(c-1)(n+1)}{\alpha+\beta+2n+2}
-\dfrac{(c+1)(\beta+n)}{\alpha+\beta+2n}, \quad\mathrm{if}\ n\ \mathrm{odd},
\end{array} \right.
\end{equation}
and
\begin{equation}
\label{eq:un}
\hspace{0.3cm}u_n:=\!
\left\{ \begin{array}{ll}
\displaystyle \!\dfrac{(c-1)^2n(\alpha+\beta+n)}{(\alpha+\beta+2n)^2},& \quad\mathrm{if}\ n\ \mathrm{even},\\[14pt]
\displaystyle \!\dfrac{(c+1)^2(\alpha+n)(\beta+n)}{(\alpha+\beta+2n)^2}, & \quad\mathrm{if}\ n\ \mathrm{odd},
\end{array} \right.
\end{equation}
with initial conditions $Q^{(0)}_{-1}(x)=0$,
$Q^{(0)}_{0}(x)=1$.
For any real $c\ne 1$ and real $\alpha$, $\beta$ 
satisfying the restriction $\alpha,\beta >-1$, 
the recurrence coefficients $b_n$, $u_n$
are real and positive. Hence the 
big $-1$ Jacobi polynomials are positive definite 
orthogonal polynomials. 

\begin{remark}\label{rem:1}
Observe that if $\alpha$ 
(resp.~$\beta$) is a negative odd 
integer, namely $\alpha=-2N+1$ 
(resp.~$\beta=-2N+1$) then 
$u_{2N-1}=0$, i.e.,~$\alpha$ ( resp. $\beta$) is 
non-standard  degenerate parameter. 
So we can apply the degenerate version of 
Favard’s theorem (see 
\cite[Theorem 2.2]{cola2}), i.e., 
there exist moment functionals, $\mathscr L_0$ and 
$\mathscr L_N$, such that the polynomial sequence 
is orthogonal with respect to the bilinear form
\[
\langle p,r\rangle=\mathscr L_0(pr)+
\mathscr L_N(\mathscr T^{(2N-1)}p \mathscr T^{(2N-1)}r),\quad p, r \in \mathbb P,
\]
where $\mathscr T$ is a certain lowering operator.
\end{remark}

\noindent Taking the former remark into account, 
the Gauss hypergeometric function with a suitable 
normalization factorizes as follows.
\begin{lemma}[Factorization] \label{lem:1}Let $n, N\in \N$, 
$a\in \mathbb C$. Then 
\begin{equation}\begin{split}\label{id:genfact}
(-N+1)_{n+N}\,\hyp{2}{1}{-n-N, a}{-N+1}{x}=&
(N+1)_n (-N+1)_{N}\,\hyp{2}{1}{-N, a}{-N+1}{x}
\hyp{2}{1}{-n,a+N}{N+1}{x}.
\end{split}\end{equation} 
\end{lemma}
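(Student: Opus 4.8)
The plan is to read both sides of \eqref{id:genfact} as honest polynomials in $x$, since the lower parameter $-N+1=1-N$ is a non-positive integer and the bare ${}_2F_1$ symbols only acquire meaning once multiplied by the Pochhammer prefactors standing next to them. For a free parameter $c$ and $m\in\N_0$ one has the polynomial identity
\begin{equation*}
(c)_{m}\,\hyp{2}{1}{-m,a}{c}{x}=\sum_{k=0}^{m}\frac{(-m)_k(a)_k}{k!}\,(c+k)_{m-k}\,x^k,
\end{equation*}
whose right-hand side is manifestly polynomial in $c$, so that evaluation at $c=1-N$ is unambiguous; I would adopt this as the definition of each regularized factor in the statement. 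Note in particular that for $N\le k$ the coefficients $(1-N)_k$ in the naive series vanish, which is exactly why the prefactors are indispensable.

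First I would dispose of the right-hand side. Taking $m=N$, $c=1-N$ above, every term with $k<N$ carries the factor $(1-N+k)_{N-k}$, which contains a zero, so only $k=N$ survives and, using $(-N)_N/N!=(-1)^N$,
\begin{equation*}
(-N+1)_N\,\hyp{2}{1}{-N,a}{-N+1}{x}=(-1)^N (a)_N\,x^N.
\end{equation*}
Hence the whole right-hand side of \eqref{id:genfact} collapses to the monomial multiple $(-1)^N(a)_N(N+1)_n\,x^N\,\hyp{2}{1}{-n,a+N}{N+1}{x}$, a power $x^N$ times a single terminating series.

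Next I would expand the left-hand side with $m=n+N$, $c=1-N$. The same mechanism kills every term with $k\le N-1$, so the sum runs over $k=N,\dots,n+N$; setting $k=N+j$ and factoring $(a)_{N+j}=(a)_N(a+N)_j$ pulls out the common $(a)_N\,x^N$ and reduces the claim to the single scalar identity
\begin{equation*}
\frac{(-n-N)_{N+j}}{(N+j)!}\,(1+j)_{n-j}=(-1)^N\,\frac{(-n)_j\,(N+1+j)_{n-j}}{j!},\qquad 0\le j\le n,
\end{equation*}
both sides of which equal $(-1)^{N+j}\,\dfrac{n!\,(n+N)!}{(n-j)!\,(N+j)!\,j!}$ once every symbol is rewritten through $(-m)_p=(-1)^p m!/(m-p)!$, $(1+j)_{n-j}=n!/j!$, and $(N+1+j)_{n-j}=(n+N)!/(N+j)!$.

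The only delicate point is the regularization bookkeeping — confirming that the series are genuinely divergent at $c=1-N$ (the surviving indices $k\ge N$ are precisely those at which $(1-N)_k=0$), that the correct object is the polynomial obtained before the substitution, and that the low-order terms drop out on both sides. Once the two members are written as explicit finite sums in $x$, the residual work is the elementary factorial identity above, so I anticipate no essential difficulty beyond careful index tracking.
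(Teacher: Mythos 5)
Your proof is correct and follows essentially the same route as the paper's: both interpret the ${}_2F_1$'s with lower parameter $-N+1$ via the Pochhammer prefactor regularization $(c)_m\,{}_2F_1(-m,a;c;x)=\sum_k\frac{(-m)_k(a)_k}{k!}(c+k)_{m-k}x^k$, observe that all terms with $k<N$ vanish, shift the index $k\mapsto k+N$, and reduce the degree-$N$ factor to the monomial $(-1)^N(a)_N x^N$. The only cosmetic difference is that the paper obtains the monomial evaluation as the $n=0$ case of its shifted-sum computation, while you evaluate it directly and then finish by comparing coefficients through elementary factorial identities.
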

\begin{proof}
By definition one has
\[\begin{split}
(-N+1)_{n+N}\,\hyp{2}{1}{-n-N, a}{-N+1}{x}=&
(-N+1)_{n+N}\sum_{k=0}^{n+N} \dfrac{(-n-N,a)_k}
{(-N+1,1)_k x^k}\\
=&\sum_{k=0}^{n+N} \dfrac{(-n-N,a)_k}{(1)_k}
(-N+k+1)_{n+N-k} x^k.
\end{split}
\]
Then setting $k=k+N$ we have
\begin{eqnarray}
\nonumber (-N+1)_{n+N}\,\hyp{2}{1}{-n-N, a}{-N+1}{x}&=&
\sum_{k=0}^{n} \dfrac{(-n-N,a)_{k+N}}{(1)_{k+N}}
(k+1)_{n-k} x^{k+N}\\ \nonumber
&=&\dfrac{(-n-N,a)_N n!}{N!}x^N 
\sum_{k=0}^{n} \dfrac{(-n,a+N)_{k}}{(N+1,1)_{k}}x^{k}\\
&=&(N+1)_n(a)_N(-x)^N\,\hyp{2}{1}{-n,a+N}{N+1}{x}.
\label{id:fact-3}
\end{eqnarray}
Setting $n=0$ in \eqref{id:fact-3},
one obtains
\begin{equation}\label{id:factinit}
(-N+1)_{N}\,\hyp{2}{1}{-N, a}{-N+1}{x}=(a)_N(-x)^N.
\end{equation}
Combining \eqref{id:fact-3} and \eqref{id:factinit}
produces the result.
\end{proof}

Now that we have obtained factorization for the Gauss hypergeometric 
function, we will use it to obtain a factorization for
the big $-1$
Jacobi polynomials when the parameter $\alpha$ 
(resp.~$\beta$) is equal to $-2N-1$ for some positive integer $N$.

\begin{lemma}[Factorization of big $-1$ Jacobi 
polynomials] \label{lem:3}
Let $m, M, N\in \mathbb N_0$, $\alpha, \beta, 
c\in \mathbb C$, $c\ne\pm 1$, with $\alpha=-2N-1$ or 
$\beta=-2N-1$. Then
\begin{eqnarray}
\label{id:bmonejfactal0}
&&Q^{(0)}_{2N+1}(x;-2N-1,\beta,c)=
(x^2-1)^{N}(x-1),\\
\label{id:bmonejfactbe0}
&&Q^{(0)}_{2N+1}(x;\alpha,-2N-1,c)=
(x^2-c^2)^{N}(x+c),\\
\label{id:bmonejfactbe1}
&&Q^{(0)}_{2N+1+m}(x;-2N-1,\beta,c)=(-1)^m
(x^2-1)^{N}(x-1) Q^{(0)}_{m}(-x;2N+1,\beta,-c),\\
\label{id:bmonejfactal1}
&&Q^{(0)}_{2N+1+m}(x;\alpha,-2N-1,c)=
(x^2-c^2)^{N}(x+c)
Q^{(0)}_{m}(x;\alpha,2N+1,-c).
\end{eqnarray} 
\end{lemma}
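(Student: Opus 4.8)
The engine of the whole lemma is the fact, already flagged in Remark~\ref{rem:1}, that the hypothesis $\alpha=-2N-1$ (resp.\ $\beta=-2N-1$) forces $u_{2N+1}=0$ in \eqref{eq:un}: the odd-index formula gives $u_{2N+1}=\frac{(c+1)^2(\alpha+2N+1)(\beta+2N+1)}{(\alpha+\beta+2(2N+1))^2}$, whose numerator vanishes. Hence the recurrence \eqref{ttrrbmonej} decouples at $n=2N+1$, so that $Q^{(0)}_{2N+1}$ is a common factor of every $Q^{(0)}_{2N+1+m}$, $m\ge 0$, and the quotients should form a shifted family. My plan is therefore to (i) compute the two base polynomials $Q^{(0)}_{2N+1}$ explicitly — these are precisely \eqref{id:bmonejfactal0} and \eqref{id:bmonejfactbe0} — and (ii) peel them off the general $Q^{(0)}_{2N+1+m}$, identifying the residual with $Q^{(0)}_m$ at the shifted parameters.

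For the base cases I would substitute the special parameter into the odd branch ($n=2N+1$) of \eqref{eq:bQdef}. When $\beta=-2N-1$ this is painless: the second upper parameter equals the lower parameter in each ${}_2F_1$, so both reduce to $(1-z)^N$ with $z:=\frac{1-x^2}{1-c^2}$; the prefactor $\frac{\alpha+\beta+n+1}{(1+c)(\alpha+1)}$ simplifies to $\frac{1-x}{1+c}$, the Pochhammers in $\kappa_{2N+1}$ from \eqref{eq:kappan} cancel, and using $x^2-c^2=(1-c^2)(1-z)$ the bracket telescopes to $(x^2-c^2)^N(x+c)$. The case $\alpha=-2N-1$ is the delicate one: now the lower parameter $\frac{\alpha+3}{2}$ hits the non-positive integer $-N+1$, so the second ${}_2F_1$ in \eqref{eq:bQdef} is singular. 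Here Lemma~\ref{lem:1} is exactly the right instrument, since \eqref{id:factinit} says that this singular factor, paired with the vanishing Pochhammer $(-N+1)_N$, regularizes to the finite monomial $(a)_N(-z)^N$; extracting it (this furnishes the $(x^2-1)^N\sim z^N$ part) and combining it with the terminating first series and $\kappa_{2N+1}$ should collapse the expression to $(x^2-1)^N(x-1)$.

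For the general identities \eqref{id:bmonejfactbe1} and \eqref{id:bmonejfactal1} I would apply the full factorization \eqref{id:fact-3} to each hypergeometric piece of \eqref{eq:bQdef}, treating $m$ even and $m$ odd separately; note that since $2N+1$ is odd, $n=2N+1+m$ has the opposite parity to $m$, so the even/odd branches of \eqref{eq:bQdef} are interchanged between the two sides of the identity. A parameter check confirms the mechanism: for $\alpha=-2N-1$ and $m=2\ell$, the second ${}_2F_1$ of $Q^{(0)}_{2N+1+m}$ is $\hyp{2}{1}{-(N+\ell),\ell+\frac{\beta+3}{2}}{-N+1}{z}$, which \eqref{id:fact-3} factors as a multiple of $z^{N}\,\hyp{2}{1}{-\ell,N+\ell+\frac{\beta+3}{2}}{N+1}{z}$; the residual ${}_2F_1$ is exactly the first hypergeometric piece of $Q^{(0)}_{2\ell}(-x;2N+1,\beta,-c)$ (recall $\frac{1-(-x)^2}{1-(-c)^2}=z$), and the first piece of $Q^{(0)}_{2N+1+m}$ matches the second piece of the shifted polynomial in the same way. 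The sign $(-1)^m$ and the reflected argument $-x$ are forced by the shift $\alpha=-2N-1\mapsto 2N+1$ together with $c\mapsto -c$; for $\beta=-2N-1$ the analogous computation keeps the argument $x$ and produces no sign.

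The main obstacle is bookkeeping rather than ideas: one must track the constants $\kappa_n$ and $\kappa_m$ through the factorization, reconcile the two ${}_2F_1$ pieces which emerge with different monomial powers ($z^{N}$ and $z^{N+1}$, the common $z^N$ being $(x^2-1)^N$), and verify that the $(1\mp x)$ prefactors assemble into the extra linear factors $(x-1)$ and $(x+c)$; the parity interchange makes this a short but parity-sensitive computation. As an independent check — and a cleaner route that sidesteps the singular ${}_2F_1$ entirely — one may instead verify that the claimed right-hand sides satisfy the same recurrence \eqref{ttrrbmonej} as $(Q^{(0)}_{2N+1+m})_m$: multiplying the recurrence of the shifted family by the common factor and using $u_{2N+1}=0$, it remains only to check $u_{2N+1+m}$ and $b_{2N+1+m}$ against the shifted-family coefficients read off \eqref{eq:un} and \eqref{eq:bn}, the latter up to the sign induced by $x\mapsto -x$ (present for $\alpha=-2N-1$, absent for $\beta=-2N-1$); the base cases then bootstrap the equality for all $m$ by induction.
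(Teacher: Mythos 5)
Your proposal follows essentially the same route as the paper: apply the factorization of Lemma~\ref{lem:1} to the ${}_2F_1$ pieces of the representation \eqref{eq:bQdef}, track the parity interchange and the normalization constants $\kappa_n$, and identify the residual with the polynomial at shifted parameters (the paper proves the general identities \eqref{id:bmonejfactbe1}--\eqref{id:bmonejfactal1} this way and obtains \eqref{id:bmonejfactal0}--\eqref{id:bmonejfactbe0} by setting $m=0$, whereas you treat the base cases first). Your alternative recurrence-comparison check is also anticipated by the paper---it is precisely the ``two-step procedure'' sketched immediately after the lemma and implemented in Lemma~\ref{lem:2}.
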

\begin{proof}
Identities \eqref{id:bmonejfactal0} and 
\eqref{id:bmonejfactbe0} follow from
\eqref{id:bmonejfactal1} and \eqref{id:bmonejfactbe1} 
by setting $m=0$ in these. 
We will first prove 
\eqref{id:bmonejfactbe1} 
and then \eqref{id:bmonejfactal1}.
Consider $m=2M$, $n=2N+1+m$. Then  one has
\begin{eqnarray}
&&\hspace{-1.5cm}Q^{(0)}_{2N+1+m}(x;\alpha,-2N-1,c)
=\kappa_{n}
\biggl(\hyp{2}{1}{-N-M,M+\widehat \beta}{-N}
{\dfrac{1-x^2}{1-c^2}}\nonumber\\
&&\hspace{4cm}+\frac{(M+\widehat\beta)(1-x)}
{(c+1)N}
\hyp{2}{1}{-N-M,M+1+\widehat \beta}{-N+1}
{\frac{1-x^2}{1-c^2}}\biggr),
\end{eqnarray}
where $\widehat \beta=(\beta+1)/2$. Next using Lemma 
\ref{lem:1} and after a tedious calculation comparing the 
final expression with the one for $Q_{m}(-x; 2N+1,\beta, -c)$ 
and its corresponding normalization coefficient, 
the identity for the even $m$ values holds. 
For $n=2N+1+2M+1$, i.e., $m=2M+1$, for $\alpha=-2N-1$, 
the proof is similar and will be omitted. 
Hence \eqref{id:bmonejfactbe1} follows. 
The identity \eqref{id:bmonejfactal1} holds 
by applying the same procedure, which completes the proof.
\end{proof}

A different way to guess and then obtain the factorization for 
the big $-1$ Jacobi polynomials for non-standard parameters throughout is to 
use the following two step procedure. The first 
step is to use Lemma \ref{lem:1} in order to prove the identities 
\eqref{id:bmonejfactal0}, \eqref{id:bmonejfactbe0}. Once 
we have that, it is straightforward to find, e.g., for $m\ge 0$, one has 
\[
Q_{2N+1+m}^{(0)}(x;-2N-1,\beta,c)=
Q_{2N+1}^{(0)}(x;-2N-1,\beta,c) P_m(x),
\]
where $P_m\in \mathbb P$ of degree $m$.
In the second step, we obtain a new recurrence 
relation for these new polynomials and comparing 
the recurrence coefficients, we prove this new 
polynomial sequence is related to the original one.
\begin{lemma}\label{lem:2}
For any $n, N\in \N$, $\alpha, \beta, 
c\in \mathbb C$, $c\ne\pm 1$, with $\alpha=-2N-1$ 
(resp.~$\beta=-2N-1$), the following recurrence relation holds
for $n\ge0$:
\[
x Q^{(0)}_{n}(x;2N+1,\beta,-c)=
-Q^{(0)}_{n+1}(x;2N+1,\beta,-c)+
\widehat b_n Q^{(0)}_{n}(x;2N+1,\beta,-c)+
\widehat u_n Q^{(0)}_{n-1}(x;2N+1,\beta,-c),
\]
where 
\[
\widehat b_n=-b_{n+2N+1}(-2N-1,\beta,c)=b_{n}(2N+1,\beta,-c),\qquad 
\widehat u_n=u_{n+2N+1}(-2N-1,\beta,c)=
u_{n}(2N+1,\beta,-c).
\]
Respectively, the following recurrence relation holds for $n\ge0$:
\[
x Q^{(0)}_{n}(x;\alpha, 2N+1,-c)=
Q^{(0)}_{n+1}(x;\alpha,2N+1,-c)+
\widetilde b_n Q^{(0)}_{n}(x;\alpha, 2N+1,-c)+
\widetilde u_n Q^{(0)}_{n-1}(x;\alpha, 2N+1,-c),
\]
where 
\[
\widetilde b_n=b_{n+2N+1}(\alpha,-2N-1,c)=
b_{n}(\alpha,2N+1,-c),\qquad 
\widetilde u_n=u_{n+2N+1}(\alpha,-2N-1,c)=
u_{n}(\alpha,2N+1,-c).
\] 
\end{lemma}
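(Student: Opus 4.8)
The plan is to establish the claimed recurrence for the shifted polynomials by directly transporting the original three-term recurrence \eqref{ttrrbmonej} through the two algebraic operations that connect the families: the parameter shift $\alpha=-2N-1\mapsto 2N+1$ (resp.\ $\beta=-2N-1\mapsto 2N+1$) together with the sign change $c\mapsto -c$ and the argument reflection $x\mapsto -x$. The key observation is that Lemma \ref{lem:3} exhibits $Q^{(0)}_{2N+1+m}(x;-2N-1,\beta,c)$ as the fixed factor $(x^2-1)^N(x-1)$ times $(-1)^m Q^{(0)}_m(-x;2N+1,\beta,-c)$, so the original recurrence at levels $n\ge 2N+1$ becomes, after dividing out the common factor, a recurrence for the $Q^{(0)}_m(-x;2N+1,\beta,-c)$.

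First I would specialize the recurrence \eqref{ttrrbmonej} to the parameter set $(-2N-1,\beta,c)$ and to the index $n=m+2N+1$ with $m\ge 0$, writing out the three neighboring polynomials $Q^{(0)}_{m+2N+1}$, $Q^{(0)}_{m+2N+2}$, $Q^{(0)}_{m+2N}$. Then I would substitute the factorization \eqref{id:bmonejfactbe1} into each term. Since each of the three indices shares the same prefactor $(x^2-1)^N(x-1)$, that factor cancels uniformly, leaving a relation purely among the reflected shifted polynomials $Q^{(0)}_m(-x;2N+1,\beta,-c)$, carrying explicit signs $(-1)^m$, $(-1)^{m+1}$, $(-1)^{m-1}$. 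Tracking these signs is what produces the leading $-Q^{(0)}_{n+1}$ on the right and the coefficient identification $\widehat b_n=-b_{n+2N+1}(-2N-1,\beta,c)$ and $\widehat u_n=u_{n+2N+1}(-2N-1,\beta,c)$; replacing the variable $-x$ by $x$ at the end accounts for the reflection.

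The second equality in each pair of coefficient identities, namely $b_{n+2N+1}(-2N-1,\beta,c)=-b_n(2N+1,\beta,-c)$ and $u_{n+2N+1}(-2N-1,\beta,c)=u_n(2N+1,\beta,-c)$, I would verify by a direct substitution into the closed forms \eqref{eq:bn}--\eqref{eq:un}. This is a finite, parity-by-parity check: one replaces $\alpha$ by $-2N-1$, increments the index by $2N+1$ (which swaps the even/odd branch), and compares against the same formulas evaluated at $\alpha=2N+1$, $c\mapsto -c$ with the unshifted index. The denominators $(\alpha+\beta+2n)^2$ are invariant under the combined shift because $(-2N-1)+(n+2N+1)=(2N+1)+n$ fails by exactly the right amount to reduce to the clean shifted denominator, so I expect the algebra to collapse cleanly once the parity bookkeeping is handled. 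The analogous $\beta=-2N-1$ statement follows by the same computation starting from \eqref{id:bmonejfactal1}, where no sign $(-1)^m$ appears and hence the recurrence retains $+Q^{(0)}_{n+1}$, giving $\widetilde b_n$ and $\widetilde u_n$ with plus signs throughout.

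The main obstacle is the sign and parity bookkeeping rather than any deep structural difficulty: because $b_n$ and $u_n$ are defined piecewise according to the parity of $n$, the shift by the odd number $2N+1$ exchanges the two branches, so I must be careful to match the even branch at shifted index against the odd branch at the base index and vice versa. Confirming that these cross-branch comparisons are consistent — and in particular that the single factor of $(-1)$ appearing in the reflected recurrence is correctly absorbed into the sign of $\widehat b_n$ but not of $\widehat u_n$ (which is quadratic in the reflected variable) — is the step most prone to error, and it is the only place where the argument requires genuine care.
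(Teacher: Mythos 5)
Your proposal is correct and follows essentially the same route as the paper's proof: the paper likewise specializes the recurrence \eqref{ttrrbmonej} at index $n=m+2N+1$ with $\alpha=-2N-1$, substitutes the factorization \eqref{id:bmonejfactbe1} from Lemma \ref{lem:3}, cancels the common factor $(x^2-1)^N(x-1)$, and reads off the coefficients from the resulting signed recurrence $x(-1)^mQ_m^{(0)}(-x;\cdot)=(-1)^{m+1}Q_{m+1}^{(0)}(-x;\cdot)+\cdots$, treating the $\beta=-2N-1$ case as analogous. The only real difference is one of explicitness: your parity-by-parity check of $-b_{n+2N+1}(-2N-1,\beta,c)=b_n(2N+1,\beta,-c)$ and $u_{n+2N+1}(-2N-1,\beta,c)=u_n(2N+1,\beta,-c)$ against the closed forms \eqref{eq:bn}--\eqref{eq:un} is compressed in the paper into ``after some straightforward calculations the result follows,'' and the sign bookkeeping you flag as delicate is indeed the crux (note that it could alternatively be bypassed by invoking uniqueness of monic three-term recurrence coefficients once the reflected recurrence is in hand).
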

\begin{proof}
We start by considering $\alpha=-2N-1$, then one has  
$u_{2N+1}=0$. In fact, 
\[
x Q^{(0)}_{2N+1}(x;2N+1,\beta,c)=(x^2-1)^N(x-1).
\]
If we consider $n\ge m+2N+1$, then
\[
Q^{(0)}_{2N+1+m}(x;-2N-1,\beta,c)=
Q^{(0)}_{2N+1}(x;-2N-1,\beta,c) (-1)^m
Q^{(0)}_{m}(-x;2N+1,\beta,-c),
\]
and therefore the recurrence relation for 
these polynomials can be written as
\[\begin{split}
x (-1)^m Q^{(0)}_{m}(-x;2N+1,\beta,-c)=&
(-1)^{m+1} Q^{(0)}_{m+1}(-x;2N+1,\beta,-c)+
b_{m+2N+1}(-1)^{m} Q^{(0)}_{m}(-x;2N+1,\beta,-c)\\
&+u_{m+2N+1}(-1)^{m-1} Q^{(0)}_{m-1}(-x;2N+1,\beta,-c).
\end{split}\]
After some straightforward calculations 
the result follows. For $\beta=-2N-1$, the derivation is analogous which completes the proof. 
\end{proof}


Note that if $u_{2N+1}=0$ for some $N\in \mathbb N$ 
then the orthogonality property for the big $-1$ 
Jacobi polynomials holds for polynomials of 
degree less or equal to $2N+1$. 
In the next result, we construct, thanks to 
the factorization of the big $-1$ polynomials, 
a new property of orthogonality that is valid 
for the big $-1$ Jacobi polynomials even when $n>2N+1$, i.e.,~we can extend the property of 
orthogonality for the big $-1$ polynomials for 
non-standard degenerate parameters.

\begin{theorem}
[Orthogonality of big $-1$ Jacobi polynomials for non-standard parameters]
\label{thm:1}
Let $N\in \N_0$, $c\in\mathbb C$, 
$c\ne\pm 1$. 
Define the linear operator $\bf u$ 
\cite[Section 4]{MR2931336} as
\[
\langle {\bf u}, p q\rangle:=\int_{[-c,-1]\cup[1,c]} p(x) q(x)\,
\dfrac{x}{|x|}(x+1)(c-x)(x^2-1)^{\frac12(\alpha-1)}(c^2-x^2)^{\frac12(\beta-1)}\, \mathrm dx.
\]
The norm with respect to the linear functional 
${\bf u}\in \mathbb P'$ is given by $h_n\!:=\! h_n(\alpha,\beta,c)$,
\begin{equation}
\label{eq:bonejn}
\hspace{0.3cm}h_n(\alpha,\beta,c)\!:=\!\langle {\bf u}, Q_n^{(0)}Q_n^{(0)}\rangle\!=
\!\langle {\bf u}, 1\rangle\!
\left\{ \begin{array}{ll}
\displaystyle \!\dfrac{2(c^2-1)^n(\frac12 n)!(\frac12(\alpha+1),\frac12(\beta+1))_{\frac12n}}
{(\frac12(\alpha+\beta)+1)_n(\frac12(\alpha+\beta+n)+1)_{\frac12n}},\ \mathrm{if}\ n\ \mathrm{even},\\[18pt]
\displaystyle \!\dfrac{2(c\!-\!1)^{n-1}(c\!+\!1)^{n+1}(\frac12(n\!-\!1))!(\frac12(\alpha\!+\!1)
,\frac12(\beta
\!+\!1))_{\frac12(n+1)}}
{(\frac12(\alpha\!+\!\beta)+1)_n(\frac12(\alpha\!+\!\beta\!+\!n\!+\!1))_{\frac12(n+1)}}, \ \mathrm{if}\ n\ \mathrm{odd}.
\end{array} \right.
\end{equation}
Then, 
the polynomial sequences 
\begin{eqnarray}
&&(Q_n^{(0)}(x;-2N-1,\beta,c)), \quad
(Q_n^{(0)}(x;\alpha,-2N-1,c)),
\end{eqnarray}
are orthogonal with respect to the bilinear forms
\begin{eqnarray}
&&\label{case1}\langle p, q\rangle_1=
{\mathscr L}_0(p,q)+
\lambda_N(\beta)\langle {\bf u}_1, (\tau_{\alpha}^{2N+1} p)
(\tau_{\alpha}^{2N+1} q)\rangle,\\
&&\label{case2}\langle p, q\rangle_2=
{\mathscr L}_1(p,q)+
\lambda_N(\alpha)\langle {\bf u}_2, (\tau_{\beta}^{2N+1} p)
(\tau_{\beta}^{2N+1} q)\rangle,
\end{eqnarray}
respectively.  Here 
$\tau_\alpha$, $\tau_\beta$ are linear 
operators defined such that 
\[
\tau_\alpha[Q^{(0)}_n(x;\alpha,\beta,c)]=
Q_{n-1}(-x;\alpha+2,\beta,-c),
\]
\[\tau_\beta[Q^{(0)}_n(x;\alpha,\beta,c)]=
Q_{n-1}(x;\alpha,\beta+2,-c),
\]
and
\[
\langle {\bf u}_1,f(x,\alpha,\beta,c) \rangle:=\langle {\bf u}, f(-x,-\alpha,\beta,-c)\rangle,
\quad 
\langle {\bf u}_2,f(x,\alpha,\beta,c) \rangle:=\langle {\bf u}, f(x,\alpha,-\beta,-c)\rangle,
\]
$\mathscr L_0$ and $\mathscr L_1$ are two 
moment linear functionals, and 
\[
\lambda_{N}(\alpha)=\frac12
h_{2N+1}(-2N-1,\alpha,c),\quad
\lambda_{N}(\beta)=\frac12
h_{2N+1}(-2N-1,\beta,c).
\]

\end{theorem}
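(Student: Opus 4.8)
The plan is to verify the orthogonality claim by reducing it to the factorization established in Lemma~\ref{lem:3} together with the shifted recurrence of Lemma~\ref{lem:2}. I will treat the case $\alpha=-2N-1$ in detail and indicate that $\beta=-2N-1$ follows by the analogous argument. The bilinear form \eqref{case1} has two pieces: a ``low-degree'' functional $\mathscr L_0$ that must reproduce the orthogonality of the polynomials $Q_n^{(0)}$ for $0\le n\le 2N+1$, and a ``high-degree'' piece built from the lowering operator $\tau_\alpha$ and the weight $\mathbf u_1$ that must take over once $n\ge 2N+1$. The governing principle is that for $\deg p,\deg q<2N+1$ one has $\tau_\alpha^{2N+1}p=\tau_\alpha^{2N+1}q=0$, so on this block only $\mathscr L_0$ acts, whereas for higher degrees the factor $(x^2-1)^N(x-1)=Q_{2N+1}^{(0)}(x;-2N-1,\beta,c)$ divides $Q_n^{(0)}$ and must annihilate the first piece.

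First I would pin down $\mathscr L_0$ explicitly: I want to choose it so that $\langle Q_m^{(0)},Q_n^{(0)}\rangle_1=h_n\,\delta_{m,n}$ for all $m,n$, where $h_n$ is the formal norm \eqref{eq:bonejn} computed from the recurrence coefficients $b_n,u_n$. Since $u_{2N+1}=0$, the product $h_0h_1\cdots h_{2N}$ is finite and nonzero, so $\mathscr L_0$ can be taken as the (unique) quasi-definite functional whose first $2N+1$ moments match those of $\mathbf u$; concretely $\mathscr L_0$ is determined on $\mathbb P_{2N}\cdot\mathbb P_{2N}$ by $\mathscr L_0(Q_m^{(0)}Q_n^{(0)})=h_n\delta_{m,n}$ for $m,n\le 2N$, and I extend it by declaring it to vanish on any product divisible by $Q_{2N+1}^{(0)}$. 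The second step is the key computation: apply $\tau_\alpha^{2N+1}$ to $Q_{2N+1+m}^{(0)}(x;-2N-1,\beta,c)$. Using the factorization \eqref{id:bmonejfactbe1}, $Q_{2N+1+m}^{(0)}=(-1)^m(x^2-1)^N(x-1)\,Q_m^{(0)}(-x;2N+1,\beta,-c)$, and iterating the defining relation $\tau_\alpha[Q_n^{(0)}(x;\alpha,\beta,c)]=Q_{n-1}(-x;\alpha+2,\beta,-c)$ $2N+1$ times, I expect $\tau_\alpha^{2N+1}Q_{2N+1+m}^{(0)}(\,\cdot\,;-2N-1,\beta,c)$ to collapse (up to an explicit constant) onto $Q_m^{(0)}(x;2N+1,\beta,c)$, i.e.\ the shifted family of Lemma~\ref{lem:2}. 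This is exactly the mechanism that lets the second term see only the ``quotient'' polynomial of degree $m$.

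Granting that computation, the orthogonality splits cleanly. For $m,n\le 2N$ both the operators $\tau_\alpha^{2N+1}$ kill the arguments, so $\langle Q_m^{(0)},Q_n^{(0)}\rangle_1=\mathscr L_0(Q_m^{(0)}Q_n^{(0)})=h_n\delta_{m,n}$ by construction. For at least one index $\ge 2N+1$, the $\mathscr L_0$ term vanishes (divisibility by $Q_{2N+1}^{(0)}$), and I am left with $\lambda_N(\beta)\langle\mathbf u_1,(\tau_\alpha^{2N+1}p)(\tau_\alpha^{2N+1}q)\rangle$; substituting the two shifted polynomials and using that $\mathbf u_1$ is, via its defining reflection $\langle\mathbf u_1,f(x,\alpha,\beta,c)\rangle=\langle\mathbf u,f(-x,-\alpha,\beta,-c)\rangle$, precisely the orthogonalizing functional for the shifted family $Q_m^{(0)}(\,\cdot\,;2N+1,\beta,c)$, this reduces to a standard (positive-definite, since the shifted parameter $2N+1>-1$) orthogonality. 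The constant $\lambda_N(\beta)=\tfrac12 h_{2N+1}(-2N-1,\beta,c)$ is then forced by matching the norm $h_{2N+1+m}$ against the shifted norm $h_m(2N+1,\beta,c)$ through the recurrence-coefficient identities $\widehat b_n=b_n(2N+1,\beta,-c)$, $\widehat u_n=u_n(2N+1,\beta,-c)$ of Lemma~\ref{lem:2}.

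The main obstacle I anticipate is the explicit evaluation of $\tau_\alpha^{2N+1}$ on the factored polynomial and the bookkeeping of constants: one must verify that the $2N+1$-fold application of the lowering operator interacts correctly with the factor $(x^2-1)^N(x-1)$ and the $(-1)^m$ and $(-x)$ reflections, producing exactly the shifted monic polynomial with no residual degree-lowering error, and then track the resulting scalar so that it combines with $\lambda_N(\beta)$ and the reflected weight $\mathbf u_1$ to yield $h_{2N+1+m}$ on the nose. A secondary point requiring care is confirming that $\mathscr L_0$ is well defined and consistent—that is, that assigning $h_n\delta_{m,n}$ for $m,n\le 2N$ and zero on multiples of $Q_{2N+1}^{(0)}$ does not over-determine the functional—which follows from the degenerate Favard theorem cited in Remark~\ref{rem:1} once $u_{2N+1}=0$ is checked from \eqref{eq:un}. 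The $\beta=-2N-1$ case uses \eqref{id:bmonejfactal1}, the operator $\tau_\beta$, the functional $\mathbf u_2$, and the $\widetilde b_n,\widetilde u_n$ identities in place of their counterparts, and is otherwise identical.
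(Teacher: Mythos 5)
Your proposal is correct and follows essentially the same route as the paper's proof: the same three-block case analysis (both degrees below $2N+1$, mixed, both at or above $2N+1$), with $\mathscr L_0$ supplied by Favard's theorem on the non-degenerate initial segment, $\tau_\alpha^{2N+1}$ annihilating the low-degree block, the factorization of Lemma~\ref{lem:3} reducing the high-degree block to the orthogonality of the shifted family $Q_m^{(0)}(-x;2N+1,\beta,-c)$ with respect to $\bf u$, and $\lambda_N(\beta)$ fixed by the norm relation $h_n=u_nh_{n-1}$. The only slip is cosmetic: $\tau_\alpha^{2N+1}Q_n^{(0)}(x;-2N-1,\beta,c)$ equals $Q_{n-2N-1}^{(0)}(-x;2N+1,\beta,-c)$ (with the reflection in $x$ and the sign change in $c$), not $Q_m^{(0)}(x;2N+1,\beta,c)$, which is exactly compensated by the reflection built into ${\bf u}_1$ as you anticipate.
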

\begin{proof}
Let us consider the situation corresponding to \eqref{case1}, i.e., $\alpha=-2N-1$, with $N\in \mathbb N$, $\beta\in \mathbb C$.
We need to prove the the properties of orthogonality $\langle Q_n^{(0)}, Q_m^{(0)}
\rangle_1= 0$ for all $m<n$, $\langle Q_n^{(0)}, Q_n^{(0)}\rangle_1\ne  0$ for all 
$n$ in order to prove that $(Q_n^{(0)})$ is a monic orthogonal polynomial sequence 
with respect to $\langle,\rangle_1$. Let us prove this in three steps: (i) $m\le n<2N+1$; (ii) $m<2N+1\le n$;  (iii) $2N+1\le n\le m$.
(i):~If $m\le n<2N+1$ 
the coefficient $u_k$ of the recurrence relation is non-zero for $k=0, 1, \dots, 2N$,
so by the Favard result (see \cite{chi1}), there exists  a moment linear functional, 
namely $\mathscr L_0$ 
such that the big $-1$ Jacobi polynomials $(Q_j^{(0)})_{j=0}^{2N}$
are orthogonal with respect to it.
Due to the 
fact that $\tau_\alpha^{2N+1}(Q_j)=0$ for $j=0, 1, ...., 2N$, 
therefore the property of orthogonality holds in this case, i.e.,
\[
\langle Q_n^{(0)}, Q_m^{(0)}\rangle_1=
\langle {\bf u},   Q_n^{(0)} Q_m^{(0)}\rangle= h_n(-2N-1,\beta,c)\delta_{n,m}, 
\]
where $\delta_{n,m}$ is the Kronecker (delta) symbol. 
(ii):~If $m<2N+1\le n$, then 
$\mathscr L_0(Q^{(0)}_n,Q^{(0)}_m)=0$; at the same 
time $\tau_\alpha^{2N+1}(Q_j)=0$ for $j=0, 1, ...., 2N$, 
therefore the property of orthogonality holds in this case.
(iii):~If $2N+1\le n\le m$, then 
$\mathscr L_0(Q^{(0)}_n,Q^{(0)}_m)=0$; and since 
\[
\tau_\alpha^{2N+1} Q^{(0)}_n(x;-2N-1,\beta,c)=
Q^{(0)}_{n-(2N+1)}(-x;2N+1,\beta,-c),
\]
is orthogonal with respect to the linear functional 
$\bf u$, one has 
\[\begin{split}
\langle Q_n^{(0)}, Q_m^{(0)}\rangle_1=&\lambda_N(\beta)
\langle {\bf u},  \tau_\alpha^{2N+1} Q^{(0)}_n(x;-2N-1,\beta,c)
\tau_\alpha^{2N+1} Q^{(0)}_m(x;-2N-1,\beta,c) 
\rangle\\[-0.5mm]
=& \lambda_N(\beta) \langle {\bf u},  Q^{(0)}_{n-2N-1}(-x;2N+1,\beta,-c)
Q^{(0)}_{m-2N-1}(-x;2N+1,\beta,-c)=
h_n(-2N-1,\beta,c)\delta_{n,m}, 
\end{split}\]
and the property of orthogonality 
holds in this situation.
Moreover, since the polynomials are monic, 
it is straightforward to check 
using the recurrence relation \eqref{ttrrbmonej} that
\begin{equation} \label{eq:3.18} 
h_n=\langle {\bf u}, xQ^{(0)}_{n}(x)Q^{(0)}_{n-1}(x)\rangle=
u_n \langle {\bf u}, xQ^{(0)}_{n-1}(x)Q^{(0)}_{n-1}(x)\rangle
=u_n h_{n-1}.
\end{equation}
Furthermore, the norm  squared \eqref{eq:bonejn} and the recurrence coefficient $u_n$ 
given in  \eqref{ttrrbmonej} satisfies \eqref{eq:3.18}. 
The situation which corresponds to \eqref{case2}, i.e., $\beta=-2N-1$, with $N\in \mathbb N$, $\alpha\in \mathbb C$ is similar so we will omit its proof. Hence the result holds. 
\end{proof}

\begin{remark}\label{rem:2}
Note that if $\alpha=-2N-1$ and $\beta\in \mathbb C$ and 
due to \eqref{id:bmonejfactal0} one must consider the 
moment linear form \cite{MR2558142}
\[
{\mathscr L_0}(p,q)=
\sum_{j=0}^{N} \lambda_j \,p^{(j)}(1)\,q^{(j)}(1)
+\sum_{j=0}^{N-1} \mu_j \,p^{(j)}(-1)\,q^{(j)}(-1),
\]
where $\lambda_j, \mu_j$ are positive. So, taking into 
account \eqref{id:bmonejfactbe1}, we have 
\[
{\mathscr L}_0(Q^{(0)}_j(x),\pi(x))=0,\quad j\ge 2N+1,
\]
for any $\pi\in\mathbb P$.
Moreover, 
by construction, if $0\le j<2N+1$ one has
\[
\langle Q^{(0)}_j(x), Q^{(0)}_j(x)\rangle_1=
{\mathscr L}_0\left(Q^{(0)}_j(x)p,Q^{(0)}_j(x)\right)=
\langle {\bf u}, Q_j^{(0)}Q_j^{(0)}\rangle=h_j\ne 0.
\]
If $j\ge 2N+1$ and taking into account 
Lemma \ref{lem:2} one has
\[\begin{split}
\langle Q^{(0)}_j(x), Q^{(0)}_j(x)\rangle_1=&
\lambda_N(\beta) \langle {\bf u}, 
Q_{j-2N-1}^{(0)}(-x;2N+1,\beta,-c)
Q_{j-2N-1}^{(0)}(-x;2N+1,\beta,-c)\rangle\\
=&\lambda_N(\beta) h_{j-2N-1}(2N+1,\beta,-c)=h_{j}(-2N-1,\beta,c).
\end{split}
\]
Observe the last identity must be considered as the 
formal limit:
\[
\tfrac12\lim_{\epsilon\to 0}h_{2N+1}(-2N-1+\epsilon,\beta,c) 
h_{j-2N-1}(2N+1+\epsilon,\beta,-c)=
\lim_{\epsilon\to 0}
h_{j}(-2N-1+\epsilon,\beta,c).
\]
For $\beta=-2N-1$, it is analogous since 
in \eqref{id:bmonejfactbe0} one must consider the 
moment linear form \cite{MR2558142}
\[
{\mathscr L_1}(p,q)=
\sum_{j=0}^{N} \kappa_j p^{(j)}(1)q^{(j)}(-c)
+\sum_{j=0}^{N-1} \ell_j p^{(j)}(-1)q^{(j)}(c),
\]
where $\kappa_j, \ell_j$ are positive. Moreover, in a similar way 
\[
\dfrac12\lim_{\epsilon\to 0}h_{2N+1}(\alpha,-2N-1+\epsilon,c) 
h_{j-2N-1}(\alpha,2N+1+\epsilon,-c)=
\lim_{\epsilon\to 0}
h_{j}(\alpha,-2N-1+\epsilon,c).
\]
\end{remark}
\section{Conclusion and future work}
We have studied the big $-1$ Jacobi polynomials, 
which are a $q\to-1$ limit of the Bannai--Ito polynomials, 
when at least one 
of the parameters $\alpha$, $\beta$, are non-standard
and the coefficient $u_n$ of the 
three-term recurrence relation \eqref{ttrrbmonej} 
is equal to zero 
for a certain index, namely $N$, 
i.e., $\alpha=-2N-1$ or $\beta=-2N-1$ where $N$ is 
a positive integer.
We have also obtained the Gauss hypergeometric 
representation, its factorization, and the property of 
orthogonality 
using similar techniques that were used in \cite{cola1}. 

In \cite[Section 4]{cola2}, a similar procedure 
for the big $q$-Jacobi polynomials
was applied for non-standard parameters for 
which the $u_n$ coefficient of its recurrence 
relation is equal to 
zero 
for some $n\in\mathbb N_0$.
Along these lines we are investigating  
the method 
for obtaining the orthogonality property of the 
big $-1$ Jacobi polynomials for non-standard 
parameters following a procedure analogous to 
the one followed by 
T.~E.~P\'erez and M.~A.~Piñar in \cite{mr1405981} 
for the generalized Laguerre polynomials. 
In \cite{cola2} the authors also studied 
the $q$-Racah polynomials for non-standard 
parameters for which the $u_n$ coefficient vanishes 
for some $n\in\mathbb N_0$ and since the Bannai--Ito 
polynomials can be obtained from the $q$-Racah polynomials (\cite[\S14.2]{Koekoeketal}) by taking 
an analogous limit $q\to -1$ (see \cite{MR2931336}), 
it makes sense to perform a similar
study for the Bannai--Ito polynomials. 
We are also 
working on obtaining explicit 
hypergeometric representations,
the factorization, and the property of orthogonality
for the Bannai--Ito polynomials and other families of the 
continuous $-1$ hypergeometric orthogonal polynomial 
scheme for non-standard parameters. 
\bibliographystyle{amsplain}
\bibliography{refbib}
\end{document}